\documentclass[11pt]{article}
\usepackage{geometry}
\usepackage{amssymb}
\usepackage{amsmath}
\usepackage{amsfonts}
\usepackage{algorithm}
\usepackage{algpseudocode}
\usepackage{bbold}
\usepackage{xcolor}
\usepackage{soul}
\usepackage{graphicx}
\usepackage{wrapfig}
\usepackage{subcaption}
\usepackage[nopar]{lipsum}
\usepackage{amsthm}
\usepackage{euscript}
\usepackage{blkarray}
\usepackage{float,epsfig, floatflt,here}
\usepackage{pgf,tikz,amsmath}
\usepackage{tikzit}
\usetikzlibrary{calc}
\usetikzlibrary{patterns}

\tikzstyle{Boxy}=[fill=black, draw=black, shape=rectangle]

\tikzstyle{new edge style 0}=[-]
\tikzstyle{Line}=[-, draw={rgb,255: red,128; green,128; blue,128}]

\usepackage{blkarray}
\usepackage{caption}
\usepackage{enumerate}
\usetikzlibrary{arrows}
\usepackage{bbm} 
\usepackage{algorithm}
\usepackage{algpseudocode}
\usepackage[pagebackref,colorlinks=true,
linkcolor=blue,
urlcolor=red,colorlinks,
citecolor=green]{hyperref}
\usepackage{cleveref}
\usepackage{amsthm}
\usetikzlibrary{decorations.markings}
\tikzset{->-/.style={decoration={
			markings,
			mark=at position #1 with {\arrow{>}}},postaction={decorate}}}

\usepackage{makecell}
\numberwithin{equation}{section}
\newtheorem{thm}{Theorem}[section]
\newtheorem{prop}[thm]{Proposition}
\newtheorem{lem}[thm]{Lemma}
\newtheorem{cor}[thm]{Corollary}
\newtheorem{exm}[thm]{Example}
\newtheorem{df}[thm]{Definition}

\newcommand {\R} {\mathbb{R}}

\newcommand {\rv} {\mathbb{R}^{V}}

\makeatletter
\@namedef{subjclassname@1991}{Subject}
\@namedef{subjclassname@2000}{Subject}
\@namedef{subjclassname@2010}{Subject}
\@namedef{subjclassname@2020}{Subject}
\makeatother

\title{On the Spectra of Threshold Hypergraphs}
\author{ Anirban Banerjee\footnote{\scriptsize Department of Mathematics and Statistics, IISER Kolkata, Kolkata, India (anirban.banerjee@iiserkol.ac.in).} 
\and Rajiv Mishra\footnote{\scriptsize Department of Mathematics and Statistics, IISER Kolkata, Kolkata, India (rm20rs017@iiserkol.ac.in).} \and Samiron Parui\footnote{\scriptsize Department of Mathematics and Statistics, IISER Kolkata, Kolkata, India (samironparui@gmail.com)}
}

\begin{document}
\maketitle
	
	
	\maketitle
	\begin{abstract}
Starting with an isolated vertex, here we construct a threshold hypergraph by repeatedly adding an isolated vertex or a $k$-dominating vertex set. We represent a threshold hypergraph by a string of non-negative integers and find the Laplacian spectrum of threshold hypergraphs from their string representation. We also compute the complete Laplacian spectrum of certain threshold hypergraphs from the Ferrer’s diagram of their degree sequences. We show that the Laplacian spectra of threshold hypergraphs are $r$-integral, i.e., integral multiple of $r$, for some $r\in \mathbb{Q}$. We also construct another class of hypergraphs whose Laplacian spectra are $r$-integral.
	\end{abstract}
	\section{Introduction}
	 In $1977$, Chv\`atal and Hammer \cite{chvtal1977aggregation} introduced threshold graphs to investigate the set packing problems. Henderson and Zalcstein \cite{Henderson1977} studied the same graphs with a different name in $1977$ to explore the process synchronization problems. Threshold graphs attracted many researchers as they have various applications in psychology, computer science, scheduling theory, etc. \cite{MahadevBook}. The spectra of different connectivity matrices associated with threshold graphs have been studied by many researchers \cite{Sciriha2011, Trevisan2013, Bapat2013, MR3672962}. 
There are many ways to produce threshold graphs \cite{golumbic2004algorithmic, MahadevBook}. In one of the constructions, a threshold graph can be created from an isolated vertex by
repeatedly adding an isolated vertex or a dominating vertex (a vertex that is connected to all other vertices). 
Here, we have generalized this construction to define threshold hypergraphs.  Researchers have attempted to generalize other methods of creating threshold graphs in order to introduce threshold hypergraphs \cite{MR791660, golumbic2004algorithmic}. The threshold hypergraphs constructed by using their method are different from ours.

A hypergraph $G$ is an ordered pair $(V, E)$ with a nonempty set $V$, called the vertex set and a hyperedge set $E$, such that $E\subseteq \mathcal{P}(V)\setminus \emptyset$. 
A hypergraph $G$ is called $m$-uniform hypergraph if $|e| = m$ for all $e\in E$. For a vertex $v\in V$, we define $E_{v}=\{e\in E:v\in e\}$ and the degree $d(v)= |E_v|$.
For any $k\in \mathbb{N}$, a set $S\subseteq V$ is called \textit{$k$-dominating set} if $|S|=k$ and for every $v\in V\setminus S$, $S\cup \{v\} \in E$.
Thus adding a $k$-dominating set $S$ to a hypergraph $G(V,E)$ is to generate a new hypergraph $G_1(V_1,E_1)$ with the vertex set  $V_1=V\cup S$ and the edge set $E_1=E\cup E_S$ where $E_S= \{S\cup \{v\}: v\in V\}$.

\begin{df}\label{T}
A hypergraph $G$ is called a threshold hypergraph if it is obtained from a single vertex by repeatedly applying any of the 
following steps an arbitrary number of times: 
\begin{enumerate}
    \item[$(1)$] Add an isolated vertex: Add a new vertex to the set of vertices without modifying the edge set. 
    \item[$(2)$] Apply $k$-domination: Add a $k$-dominating set to the existing hypergraph for some $k\in \mathbb{N}$.
    
\end{enumerate}
\end{df}
A threshold graph on $n$ vertices can be uniquely represented by a binary string $s_1 s_2 \dots s_n$ where $s_i=1$ if at the $i$-th step we add a dominating vertex, and $s_i=0$ if at the $i$-th step we add an isolated vertex \cite{MR3672962}. Since we start the construction with an isolated vertex, $s_1$ always takes the value zero. Our threshold hypergraph can be represented by a by a string $s_1\, s_2\,\ldots\,s_l$ of nonnegative integers, where 
$$
s_j=\begin{cases}
    k &\text{~if~} k \text{-domination is applied in } j\text{-th step,}\\
    0&\text{~otherwise}.
\end{cases}
$$
Here the first term $s_1$ is also zero since we start with an isolated vertex  to construct a threshold hypergraph. Now we rewrite our string concisely. We assume $m_i$ is the number of isolated vertices added between the $(i-1)$-th and $i$-th domination, $k_i$-dominating set is added at 
the $i$-th domination,  and $F_i$ is the set of all the hyperedges added in the $i$-th domination.
Therefore, we can represent a connected threshold hypergraph by the following string $$0^{m_1}\,k_1\,0^{m_2}\,k_2\,\ldots\,0^{m_d}\,k_d $$
where $0^{m}$ denotes $m$ consecutive zeros in the string, and $d$ denotes the total number of dominations applied to construct the threshold hypergraph. Clearly, $m_1\geq 1$.
For a threshold graph, the Laplacian spectrum can be computed directly from its binary string representation \cite{MR3672962}. In this work, one of our main results is to show that the complete Laplacian spectrum of a threshold hypergraph can be obtained from its string $0^{m_1}\,k_1\,0^{m_2}\,k_2\,\ldots\,0^{m_d}\,k_d$. This is addressed in \Cref{spectra-threshold}. 
\begin{thm}\label{spectra-threshold}
Let $G=0^{m_1}\,k_1\,0^{m_2}\,k_2\,\ldots\,0^{m_d}\,k_d $ be a threshold hypergraph. Then the eigenvalues of $L_G$ are as follows
\begin{enumerate}
    \item[$(i)$]  $0$ with multiplicity $1$;
    \item[$(ii)$] $d-i+1$ with multiplicity $m_i$, for each $i=2,\ldots,d$;
       \item[$(iii)$]   $d$ with multiplicity $m_1-1$;
       \item[$(iv)$]  $\frac{1}{k_i} \left(\sum\limits_{j=1}^i m_j+\sum\limits_{j=1}^i k_j\right)+d-i$ with multiplicity $1$, for each $i=1,\ldots, d$;
         \item[$(v)$]  $\frac{k_i+1}{k_i} \left(\sum\limits_{j=1}^i m_j+\sum\limits_{j=1}^{i-1} k_j\right)+d-i$ with multiplicity $k_i-1$, for each $i=1,\ldots,d$.
\end{enumerate}
 \end{thm}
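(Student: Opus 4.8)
The plan is to exploit the large symmetry of the construction through an equitable partition, splitting the vertex space into a \emph{symmetric} part, on which $L_G$ collapses to a $2d\times 2d$ matrix, and a complementary \emph{fluctuation} part, on which $L_G$ acts block-by-block as a scalar. Throughout I use the Laplacian $L_G=D_G-A_G$ with $(A_G)_{uv}=\sum_{e\ni u,v}1/(|e|-1)$, which reduces to the ordinary graph Laplacian when every edge has size two. First I would fix notation: write $W_i$ for the set of $m_i$ isolated vertices inserted just before the $i$-th domination and $S_i$ for the $k_i$-set added at that domination, so that $V$ is partitioned into the $2d$ blocks $W_1,\dots,W_d,S_1,\dots,S_d$, and set $N_a=\sum_{j\le a}m_j+\sum_{j<a}k_j$, the number of vertices present when $S_a$ is added. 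A direct count over the edge families $F_j=\{S_j\cup\{v\}\}$ then yields, once and for all, the degrees $d(v)=d-a+1$ for $v\in W_a$ and $d(s)=N_a+(d-a)$ for $s\in S_a$, together with the pairwise adjacencies: two vertices of $W_a$ are non-adjacent, two vertices of $S_a$ have adjacency $N_a/k_a$, and the cross-block adjacencies collapse to clean quantities of the form $1/k_{\max(a,b)}$. Every later computation is an immediate consequence of this table.

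Because the vertices inside any block are interchanged by an automorphism of $G$, the operator $L_G$ commutes with all permutations preserving the partition; hence the space $U\subseteq\mathbb{R}^{V}$ of vectors constant on each block and its orthogonal complement $U^\perp$ (vectors of mean zero on every block) are both $L_G$-invariant. On $U^\perp$ the analysis is immediate: a mean-zero vector supported on a single block $B$ is an eigenvector with eigenvalue $d_B+\alpha_B$, where $d_B$ is the common degree and $\alpha_B$ the common within-block adjacency. This gives eigenvalue $d-a+1$ on $W_a$ with multiplicity $m_a-1$, and eigenvalue $\frac{k_a+1}{k_a}N_a+(d-a)$ on $S_a$ with multiplicity $k_a-1$, producing item $(iii)$, item $(v)$, and the multiplicity-$(m_i-1)$ portion of item $(ii)$.

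The heart of the argument is the restriction of $L_G$ to $U$, namely the $2d\times 2d$ quotient matrix $\hat L$ in block coordinates $(p_1,\dots,p_d;q_1,\dots,q_d)$. Using the table above, $\hat L$ takes a convenient shape: the $p$–$p$ block is diagonal, equal to $\operatorname{diag}(d-a+1)$; the $(W_a,S_b)$ coupling equals $-1$ when $a\le b$ and $0$ otherwise; and the $S$–$S$ and $S$–$W$ blocks are triangular with entries read off directly. I would diagonalise $\hat L$ by one of two routes. The direct route exhibits all $2d$ eigenvectors: the all-ones vector for the eigenvalue $0$ (item $(i)$); for each $a\ge 2$ a vector with $q_b=0$ for $b\ge a$ and a nested pattern on the earlier blocks giving eigenvalue $d-a+1$; and for each $i$ a further vector realising the value $\frac1{k_i}\bigl(\sum_{j\le i}m_j+\sum_{j\le i}k_j\bigr)+d-i$ of item $(iv)$. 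The structural route eliminates the diagonal $p$-block and passes to the Schur complement, factoring $\det(\hat L-\lambda I)=\prod_{a}(d-a+1-\lambda)\,g(\lambda)$ and showing that $g(\lambda)=\pm\,\lambda\prod_i(\lambda-\mu_i)/(d-\lambda)$, where the $\mu_i$ are precisely the values in $(iv)$.

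This last step is where I expect the real difficulty to lie. The Schur complement carries apparent simple poles at each $\lambda=d-a+1$, and the crux is to prove that for $a\ge 2$ these poles cancel inside the determinant—so that the corresponding root survives and fuses with the fluctuation eigenvalue of Step~2 to upgrade the multiplicity in $(ii)$ from $m_i-1$ to $m_i$—while at $\lambda=d$ the pole persists and turns the spurious factor $(d-\lambda)$ into the genuine eigenvalue $0$. The cancellation amounts to checking that the rank-one residue matrices $M_{\cdot,a}\,C_{a,\cdot}$ annihilate the relevant adjugate, which is exactly the structural identity encoded by the triangularity of the coupling blocks; in the eigenvector route, the same content reappears as the verification that the nested ansatz solves the coupled recursion relating the $W$- and $S$-coordinates. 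Once this is established, a dimension count—the fluctuation eigenvalues number $(m_1-1)+\sum_{i\ge2}(m_i-1)+\sum_i(k_i-1)$ and, with the $2d$ eigenvalues of $\hat L$, sum to $|V|$—together with the trace check $\operatorname{tr}\hat L=d^2+\sum_a N_a/k_a$ matching the sum of the listed eigenvalues, confirms that nothing has been omitted and completes the proof.
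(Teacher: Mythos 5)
Your equitable-partition setup is sound, and the fluctuation-space half of the argument is correct and complete: mean-zero vectors supported on a single block $W_a$ or $S_a$ are indeed eigenvectors with eigenvalues $d-a+1$ and $\frac{k_a+1}{k_a}N_a+(d-a)$ respectively, which gives items $(iii)$ and $(v)$ and $m_i-1$ copies of each value in $(ii)$. The genuine gap is exactly where you flag it: the diagonalization of the $2d\times 2d$ quotient matrix $\hat L$ is never carried out, and that matrix carries item $(i)$, all of item $(iv)$, and the final copy of each eigenvalue in item $(ii)$ --- i.e., everything in the theorem beyond the easy symmetry eigenvalues. You offer two routes (a nested eigenvector ansatz, and a Schur-complement pole-cancellation) but complete neither; both are left as assertions about what ``would'' happen. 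Worse, the mechanism you propose for the cancellation --- ``the structural identity encoded by the triangularity of the coupling blocks'' --- is not available as stated: while the $(W_a,S_b)$ block is triangular ($-1$ for $a\le b$) and the $(S_a,W_b)$ block is triangular ($-m_b/k_a$ for $b\le a$), the $S$--$S$ block is \emph{not} triangular; the row of $S_a$ couples to $S_b$ with entry $-k_b/k_a$ for $b<a$ and with entry $-1$ for $b>a$. So the key step is not a routine verification you postponed, but an argument that still needs to be found.

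The paper avoids this difficulty entirely by not treating all $d$ dominations at once. It inducts on the number of dominations via a one-step lemma (\Cref{lem:k_dom_spectra}): applying a single $k$-domination to a hypergraph on $|V|$ vertices sends every nonzero eigenvalue $\lambda$ to $\lambda+1$ (extend the old eigenvector by zero on $S$; orthogonality to constants makes the new dominating edges contribute exactly $+1$), creates the eigenvalue $\frac{1}{k}(|V|+k)$ once and $\frac{k+1}{k}|V|$ with multiplicity $k-1$, all with explicit eigenvectors; adding isolated vertices merely appends zeros. Iterating yields the theorem with no $2d\times 2d$ matrix to analyze. If you want to salvage your framework, the cleanest repair is to import that observation: prove the spectrum of $\hat L$ by induction on $d$, noting that a quotient eigenvector of the hypergraph with $d-1$ dominations that is orthogonal to constants, extended by zero on $W_d\cup S_d$, is a quotient eigenvector of the $d$-domination hypergraph with eigenvalue shifted by $1$; this replaces your unproven nested-ansatz verification by a one-step computation identical to the paper's lemma.
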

We prove \Cref{spectra-threshold} in \Cref{sec_thld}.

The Laplacian spectrum of a threshold graph can also be computed from the Ferrer’s diagram of its degree sequence \cite{MR2797201}.
\begin{df}[Ferrer's diagram]{\rm\cite[p. $148$]{MR2797201}}
Suppose that $S=\{a_1, a_2,\ldots, a_n\}$ is a finite collection of non-negative integers such that $a_i\ge a_{i+1}$ for all $i = 1, \dots, n-1$. The Ferrer's diagram of $S$ is a diagram consisting of $n$ rows of boxes such that
\begin{enumerate}
    \item [$(1)$]the rows are left-justified, that is, start from the left, and
    \item [$(2)$]the $i$-th row $($from above$)$ contains $a_i$ number of boxes. 
\end{enumerate}
\end{df}

In Ferrer's diagram, a \textit{block} is a collection of all the rows with the same number of boxes. The number of rows within a block is called the \textit{height} of the block. The \textit{width} of a block is the number of boxes in a row in that block (see \Cref{fig:ferrer}). 

\begin{exm}\rm \label{exm:ferrer}
Let $G$ be a threshold hypergraph whose hyperedge set $E$ is as follows
\begin{eqnarray*}
    E&=&\{\{1 ,2, 3, 4\}, \{1, 6, 7, 8\}, \{2, 6, 7, 8\},\{3, 6, 7, 8\}, \{4, 6, 7, 8\}, \{5, 6, 7, 8\}, \\
    &&\{1, 11, 12, 13\}, \{2, 11, 12, 13\}, \{3, 11, 12, 13\}, \{4, 11, 12, 13\}, \{5, 11, 12, 13\}, \{6, 11, 12, 13\}, \\
    &&\{7, 11, 12, 13\}, \{8, 11, 12, 13\}, \{9, 11, 12, 13\}, \{10, 11, 12, 13\}, \{1, 14, 15, 16\}, \{2, 14, 15, 16\},\\
    &&\{3, 14, 15, 16\}, \{4, 14, 15, 16\}, \{5, 14, 15, 16\}, \{6, 14, 15, 16\}, \{7, 14, 15, 16\}, \{8, 14, 15, 16\}, \\
    &&\{9, 14, 15, 16\}, \{10, 14, 15, 16\}, \{11, 14, 15, 16\}, \{12, 14, 15, 16\}, \{13, 14, 15, 16\}\};
\end{eqnarray*}
After arranging the degree sequence in decreasing order, we have
\[\{13,13,13,11,11,11,7,7,7,4,4,4,4,3,2,2\}\]
The Ferrer's diagram of the degree sequence is shown in \Cref{fig:ferrer}.
\end{exm}
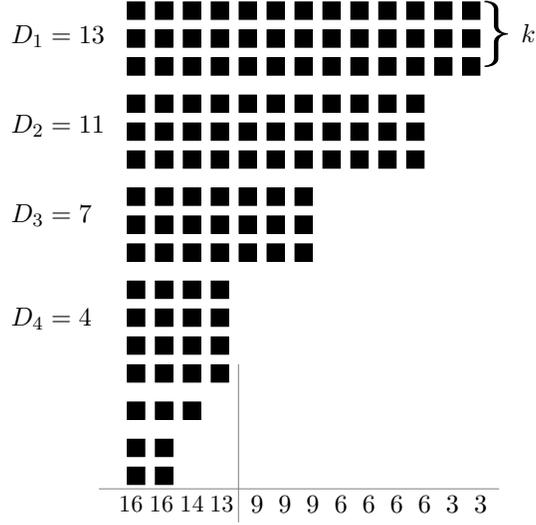
\begin{figure}[ht] 
\centering
\scalebox{.9}{\begin{tikzpicture}[scale=0.55]
	\begin{pgfonlayer}{nodelayer}
	\draw (-4.9,7.14) node[anchor=north west, scale=2.54] {$\}$};
		\draw (-3.4,6.14) node[anchor=north west, scale=1] {$k$};
	\draw (-17.1,6.1) node[anchor=north west, scale=1] {$D_1=13$};

		\draw (-17.1,3.7) node[anchor=north west, scale=1] {$D_2=11$};

		\draw (-17.1,1.3) node[anchor=north west, scale=1] {$D_3=7$};

		\draw (-17.1,-1.5) node[anchor=north west, scale=1] {$D_4=4$};

		\node [style=Boxy] (0) at (-13.5, 6.25) {};
		\node [style=Boxy] (1) at (-12.75, 6.25) {};
		\node [style=Boxy] (2) at (-12, 6.25) {};
		\node [style=Boxy] (3) at (-11.25, 6.25) {};
		\node [style=Boxy] (4) at (-10.5, 6.25) {};
		\node [style=Boxy] (5) at (-9.75, 6.25) {};
		\node [style=Boxy] (6) at (-9, 6.25) {};
		\node [style=Boxy] (7) at (-8.25, 6.25) {};
		\node [style=Boxy] (8) at (-7.5, 6.25) {};
		\node [style=Boxy] (9) at (-6.75, 6.25) {};
		\node [style=Boxy] (10) at (-6, 6.25) {};
		\node [style=Boxy] (11) at (-5.25, 6.25) {};
		\node [style=Boxy] (12) at (-4.5, 6.25) {};
		\node [style=Boxy] (13) at (-13.5, 5.5) {};
		\node [style=Boxy] (14) at (-12.75, 5.5) {};
		\node [style=Boxy] (15) at (-12, 5.5) {};
		\node [style=Boxy] (16) at (-11.25, 5.5) {};
		\node [style=Boxy] (17) at (-10.5, 5.5) {};
		\node [style=Boxy] (18) at (-9.75, 5.5) {};
		\node [style=Boxy] (19) at (-9, 5.5) {};
		\node [style=Boxy] (20) at (-8.25, 5.5) {};
		\node [style=Boxy] (21) at (-7.5, 5.5) {};
		\node [style=Boxy] (22) at (-6.75, 5.5) {};
		\node [style=Boxy] (23) at (-6, 5.5) {};
		\node [style=Boxy] (24) at (-5.25, 5.5) {};
		\node [style=Boxy] (25) at (-4.5, 5.5) {};
		\node [style=Boxy] (26) at (-13.5, 4.75) {};
		\node [style=Boxy] (27) at (-12.75, 4.75) {};
		\node [style=Boxy] (28) at (-12, 4.75) {};
		\node [style=Boxy] (29) at (-11.25, 4.75) {};
		\node [style=Boxy] (30) at (-10.5, 4.75) {};
		\node [style=Boxy] (31) at (-9.75, 4.75) {};
		\node [style=Boxy] (32) at (-9, 4.75) {};
		\node [style=Boxy] (33) at (-8.25, 4.75) {};
		\node [style=Boxy] (34) at (-7.5, 4.75) {};
		\node [style=Boxy] (35) at (-6.75, 4.75) {};
		\node [style=Boxy] (36) at (-6, 4.75) {};
		\node [style=Boxy] (37) at (-5.25, 4.75) {};
		\node [style=Boxy] (38) at (-4.5, 4.75) {};
		\node [style=Boxy] (39) at (-13.5, 3.75) {};
		\node [style=Boxy] (40) at (-12.75, 3.75) {};
		\node [style=Boxy] (41) at (-12, 3.75) {};
		\node [style=Boxy] (42) at (-11.25, 3.75) {};
		\node [style=Boxy] (43) at (-10.5, 3.75) {};
		\node [style=Boxy] (44) at (-9.75, 3.75) {};
		\node [style=Boxy] (45) at (-9, 3.75) {};
		\node [style=Boxy] (46) at (-8.25, 3.75) {};
		\node [style=Boxy] (47) at (-7.5, 3.75) {};
		\node [style=Boxy] (48) at (-6.75, 3.75) {};
		\node [style=Boxy] (49) at (-6, 3.75) {};
		\node [style=Boxy] (50) at (-13.5, 3) {};
		\node [style=Boxy] (51) at (-12.75, 3) {};
		\node [style=Boxy] (52) at (-12, 3) {};
		\node [style=Boxy] (53) at (-11.25, 3) {};
		\node [style=Boxy] (54) at (-10.5, 3) {};
		\node [style=Boxy] (55) at (-9.75, 3) {};
		\node [style=Boxy] (56) at (-9, 3) {};
		\node [style=Boxy] (57) at (-8.25, 3) {};
		\node [style=Boxy] (58) at (-7.5, 3) {};
		\node [style=Boxy] (59) at (-6.75, 3) {};
		\node [style=Boxy] (60) at (-6, 3) {};
		\node [style=Boxy] (61) at (-13.5, 2.25) {};
		\node [style=Boxy] (62) at (-12.75, 2.25) {};
		\node [style=Boxy] (63) at (-12, 2.25) {};
		\node [style=Boxy] (64) at (-11.25, 2.25) {};
		\node [style=Boxy] (65) at (-10.5, 2.25) {};
		\node [style=Boxy] (66) at (-9.75, 2.25) {};
		\node [style=Boxy] (67) at (-9, 2.25) {};
		\node [style=Boxy] (68) at (-8.25, 2.25) {};
		\node [style=Boxy] (69) at (-7.5, 2.25) {};
		\node [style=Boxy] (70) at (-6.75, 2.25) {};
		\node [style=Boxy] (71) at (-6, 2.25) {};
		\node [style=Boxy] (72) at (-13.5, 1.25) {};
		\node [style=Boxy] (73) at (-12.75, 1.25) {};
		\node [style=Boxy] (74) at (-12, 1.25) {};
		\node [style=Boxy] (75) at (-11.25, 1.25) {};
		\node [style=Boxy] (76) at (-10.5, 1.25) {};
		\node [style=Boxy] (77) at (-9.75, 1.25) {};
		\node [style=Boxy] (78) at (-9, 1.25) {};
		\node [style=Boxy] (79) at (-13.5, 0.5) {};
		\node [style=Boxy] (80) at (-12.75, 0.5) {};
		\node [style=Boxy] (81) at (-12, 0.5) {};
		\node [style=Boxy] (82) at (-11.25, 0.5) {};
		\node [style=Boxy] (83) at (-10.5, 0.5) {};
		\node [style=Boxy] (84) at (-9.75, 0.5) {};
		\node [style=Boxy] (85) at (-9, 0.5) {};
		\node [style=Boxy] (86) at (-13.5, -0.25) {};
		\node [style=Boxy] (87) at (-12.75, -0.25) {};
		\node [style=Boxy] (88) at (-12, -0.25) {};
		\node [style=Boxy] (89) at (-11.25, -0.25) {};
		\node [style=Boxy] (90) at (-10.5, -0.25) {};
		\node [style=Boxy] (91) at (-9.75, -0.25) {};
		\node [style=Boxy] (92) at (-9, -0.25) {};
		\node [style=Boxy] (93) at (-13.5, -1.25) {};
		\node [style=Boxy] (94) at (-12.75, -1.25) {};
		\node [style=Boxy] (95) at (-12, -1.25) {};
		\node [style=Boxy] (96) at (-11.25, -1.25) {};
		\node [style=Boxy] (97) at (-13.5, -2) {};
		\node [style=Boxy] (98) at (-12.75, -2) {};
		\node [style=Boxy] (99) at (-12, -2) {};
		\node [style=Boxy] (100) at (-11.25, -2) {};
		\node [style=Boxy] (101) at (-13.5, -2.75) {};
		\node [style=Boxy] (102) at (-12.75, -2.75) {};
		\node [style=Boxy] (103) at (-12, -2.75) {};
		\node [style=Boxy] (104) at (-11.25, -2.75) {};
		\node [style=Boxy] (105) at (-13.5, -3.5) {};
		\node [style=Boxy] (106) at (-12.75, -3.5) {};
		\node [style=Boxy] (107) at (-12, -3.5) {};
		\node [style=Boxy] (108) at (-11.25, -3.5) {};
		\node [style=Boxy] (109) at (-13.5, -4.5) {};
		\node [style=Boxy] (110) at (-12.75, -4.5) {};
		\node [style=Boxy] (111) at (-12, -4.5) {};
		\node [style=Boxy] (112) at (-13.5, -5.5) {};
		\node [style=Boxy] (113) at (-12.75, -5.5) {};
		\node [style=Boxy] (116) at (-13.5, -6.25) {};
		\node [style=Boxy] (117) at (-12.75, -6.25) {};
		\node [style=none] (118) at (-14.5, -6.6) {};
		\node [style=none] (119) at (-3.75, -6.6) {};
		\node [style=none] (120) at (-10.75, -3.25) {};
		\node [style=none] (121) at (-10.75, -7.5) {};
		\node [style=none] (122) at (-13.5, -7) {};
		\node [style=none] (123) at (-13.5, -7) {};
		\node [style=none] (124) at (-13.64, -7) {\small 16};
		\node [style=none] (125) at (-12.82, -7) {\small 16};
		\node [style=none] (126) at (-12, -7) {\small 14};
		\node [style=none] (127) at (-11.2, -7) {\small 13};
		\node [style=none] (128) at (-10.25, -7) {9};
		\node [style=none] (129) at (-9.5, -7) {9};
		\node [style=none] (130) at (-8.75, -7) {9};
		\node [style=none] (131) at (-8, -7) {6};
		\node [style=none] (132) at (-7.25, -7) {6};
		\node [style=none] (133) at (-6.5, -7) {6};
		\node [style=none] (134) at (-5.75, -7) {6};
		\node [style=none] (135) at (-4.25, -7) {3};
		\node [style=none] (136) at (-5, -7) {3};
	\end{pgfonlayer}
	\begin{pgfonlayer}{edgelayer}
		\draw [style=Line] (118.center) to (119.center);
		\draw [style=Line] (120.center) to (121.center);
	\end{pgfonlayer}
\end{tikzpicture}}
\caption{Ferrer's diagram of the degree sequence $\{13,13,13,11,11,11,7,7,7,4,4,4,4,3,2,2\}$ of a uniform $4$-threshold hypergraph, where $D_{i}$ is the width of $i$-th block from the top.}
\label{fig:ferrer}
\end{figure}
We also find the complete Laplacian spectrum of certain threshold hypergraphs from the Ferrer’s
diagram of their degree sequences. A threshold graph is a threshold hypergraph where each domination is $1$-domination. 
A threshold hypergraph is said to be a \textit{$k$-threshold hypergraph} if each domination applied is a $k$-domination. Another focus of our work is to find the complete Laplacian spectrum of a $k$-threshold hypergraph from the Ferrer’s diagram of its degree sequence. This is described in the following theorem which is proved in \Cref{sec_thld}.

\begin{thm}\label{fd}
Let $d_1\geq d_2\geq \ldots \geq d_n$ be the degree sequence of a $k$-threshold hypergraph $G$. Then the complete list of eigenvalues of $L_G$ is as follows.
\begin{enumerate}
  \item[$(i)$]  $0$ with multiplicity $1$;
       \item[$(ii)$]  $ \frac{1}{k}C_{D_{i}}$ with multiplicity $ D_i-D_{i+1}-k$ for all $i=1,\ldots, d-1$;
        \item[$(iii)$]  $ \frac{1}{k}C_{D_d}$ with multiplicity $ D_d-D_{d+1}$ if $D_d\neq d$;
        \item[$(iv)$] $ \frac{1}{k}C_i$ with multiplicity multiplicity $1$, for all $i=1,\ldots,d$;
         \item[$(v)$]   $ \frac{1}{k}[(k+1)D_{i}-i+1]$ with multiplicity $ k-1$ for all $i=1,\ldots d$.
\end{enumerate}
Here $D_{i}$ and $C_i$ are the width of $i$-th block from above and the $i$-th column sum from the left, respectively, in the Ferrer's diagram of the degree sequence, and $d=\min\{s\in\mathbb{N}: D_s <s\}-1$.
\end{thm}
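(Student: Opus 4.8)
The plan is to deduce \Cref{fd} from \Cref{spectra-threshold} by translating the parameters $m_i,k,d$ of the defining string into the geometric data $D_i,C_i$ of the Ferrer's diagram. The first step is to read the degree sequence off the construction. Writing $\sigma_i=\sum_{j=1}^{i}m_j$, just before the $i$-th domination the hypergraph has $\sigma_i+(i-1)k$ vertices; since adding a $k$-dominating set raises the degree of every existing vertex by $1$ and gives each of its $k$ new vertices a degree equal to the current vertex count, there are exactly two kinds of vertices. Each of the $k$ vertices of the $i$-th dominating set has degree
\[
a_i:=\sigma_i+(i-1)k+(d-i),
\]
and each of the $m_i$ isolated vertices inserted before the $i$-th domination has degree $d-i+1$. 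I would prove these two formulas by induction on the construction step, tracking how many later dominations each vertex survives. This column-sum description should be seen as the hypergraph analogue of the classical fact that the Laplacian spectrum of a threshold graph is the conjugate of its degree sequence.

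Next I would assemble the Ferrer's diagram. Since $a_1\le a_2\le\cdots\le a_d$ and every $a_i\ge d$, while all isolated degrees are at most $d$, the $k$-fold degrees $a_d,\dots,a_1$ occupy the top blocks and the isolated vertices lie at the bottom; hence the $i$-th block from above has width $D_i=a_{d-i+1}$ for $i=1,\dots,d$ (the bottom one possibly merged with the degree-$d$ isolated vertices). From $a_1=m_1+d-1\ge d$ one gets $D_s\ge d\ge s$ for all $s\le d$, whereas the first isolated block has width at most $d$; this is exactly what forces $\min\{s:D_s<s\}-1=d$, identifying the combinatorial $d$ of \Cref{fd} with the number of dominations. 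The column sums are cumulative degree counts, $C_j=\#\{v:d(v)\ge j\}$, and a short computation gives $C_j=dk+\sigma_{d-j+1}$ for $1\le j\le d$, while $C_{D_i}=\sum_{\ell\le i}(\text{height of block }\ell)$.

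With this dictionary the two ``large'' families fall out by substitution. Inverting $D_i=a_{d-i+1}$ yields $\sigma_{d-i+1}+(d-i)k=D_i-i+1$, and feeding this into parts $(iv)$ and $(v)$ of \Cref{spectra-threshold} (with the index there taken to be $d-i+1$) reproduces $\tfrac1k C_i$ and $\tfrac1k[(k+1)D_i-i+1]$ verbatim, together with the multiplicities $1$ and $k-1$. For the remaining families $(ii)$ and $(iii)$, which are the eigenvalues coming from the isolated blocks, I would identify the integer eigenvalues $d-i+1$ with the column-sum values $\tfrac1k C_{D_i}$ at the block boundaries and recover their multiplicities from the corresponding block heights, expressed through consecutive width differences $D_i-D_{i+1}$ corrected by the $k$ rows of the dominating set lying in the same width range; the hypothesis $D_d\ne d$ in $(iii)$ is precisely the condition that the bottom dominating block has not merged with the degree-$d$ isolated vertices.

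The main obstacle will be this multiplicity bookkeeping for the lower blocks, not the eigenvalues themselves. Three features must be reconciled at once: a block can be \emph{merged}, for instance when $m_1=1$ the dominating set $S_1$ and the first isolated block share the width $d$ (the case excluded by $D_d\ne d$); a block can be \emph{empty} when some $m_i=0$, so that width differences must be read with care; and the single $0$ eigenvalue together with the ``$-1$'' appearing in the multiplicity $m_1-1$ must be absorbed consistently. I would close the argument by verifying that the multiplicities in $(i)$–$(v)$ sum to the total number of vertices $dk+\sigma_d$, which both fixes the correct bookkeeping in the degenerate cases and certifies that no eigenvalue has been double-counted.
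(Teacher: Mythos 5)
Your proposal follows essentially the same route as the paper's own proof: derive the two degree formulas (dominating vertices of the $i$-th domination have degree $\sigma_i+(i-1)k+(d-i)$, isolated vertices added before it have degree $d-i+1$), translate the string data $(m_i,k,d)$ into the Ferrer's-diagram data via $D_i=a_{d-i+1}$ and $C_j=dk+\sigma_{d-j+1}$, and then substitute into \Cref{spectra-threshold}; the paper does exactly this, routing the substitution through \Cref{cor:k_threshold}. Every identity in your dictionary matches one in the paper's proof, including the characterization of $D_d\neq d$ as the non-merging condition $m_1>1$.

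One substantive remark. Your closing verification step (that the multiplicities sum to $n=dk+\sigma_d$) does not appear in the paper's proof, and it is not an idle check: carried out honestly, it fails for the statement as printed. The dictionary yields $m_{d-i+1}=D_i-D_{i+1}-k+1$, so the eigenvalue $\tfrac{1}{k}C_{D_i}$ has multiplicity $D_i-D_{i+1}-k+1$, not the $D_i-D_{i+1}-k$ claimed in part $(ii)$; with the printed multiplicities the total comes to $n+1-d$. The paper's own \Cref{exm:ferrer} makes this visible: there $D_1-D_2-k=13-11-3=-1$, a negative ``multiplicity,'' while the spectrum actually listed in that example uses the corrected values coming from \Cref{cor:k_threshold}. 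So your plan, followed through, proves the amended statement (and part $(iii)$, which is stated correctly, needs no change); the block-merging and empty-block cases you flag as the main obstacle (e.g.\ $k=1$ with some $m_i=0$, where consecutive dominating blocks coalesce and the block indexing itself breaks) are likewise glossed over by the paper, and your sum check is the right instrument for pinning all of this down.
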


A quest in spectral graph theory is: what are the graphs for which all the eigenvalues of the associated Laplacian are integers, and what can we say about the structure of those graphs? Till now, this is partially answered (see \cite{MerrisR, kirkland, Yizheng} for some progress in this direction). This 
question is also open for hypergraph. Here, we show that the Laplacian spectra of threshold hypergraphs are integral  multiple of $r$, for some $r\in \mathbb{Q}$. We also derive another class of hypergraphs whose Laplacian spectra have the same property. \Cref{sec:Integral_HyperGraphs} is dedicated to this.

\section{Spectrum of Threshold Hypergraphs}\label{sec_thld}

Let $G(V, E)$ be a finite hypergraph, that is,  $V$ is finite and let  $|e|\geq 2$ for all $e\in E$. Let $\rv$ denote the set of all real-valued functions on $V$. For $G$, the Laplacian $L_G:\rv \to \rv$ is defined as
$$(L_G f)(v)=\sum\limits_{e\in E_v}\frac{1}{|e|-1}\sum\limits_{u\in e}(f(v)-f(u))$$
for all $v\in V$. The eigenvalues of $L_G$ are real and we order the eigenvalues of $L_G$ as $0=\lambda_1\leq \lambda_2\leq \ldots \leq \lambda_{|V|}$, thus $\lambda_2>0$ if the hypergraph is connected \cite{MR4208993}.
Now to prove \Cref{spectra-threshold}, we prove the following lemma.

 \begin{lem}\label{lem:k_dom_spectra}
Let $G(V,E)$ be a hypergraph and $\lambda_1=0, \lambda_2, \ldots, \lambda_{|V|}$ be the eigenvalues of $L_G$. Suppose $G_1(V_1,E_1)$ is a hypergraph obtained by applying $k$-domination on $G$ for some $k\in \mathbb{N}$. Then the eigenvalues of $L_{G_1}$ are given by

\begin{enumerate}
    \item [$(i)$] $0$ with multiplicity $1$;
    \item [$(ii)$] $\lambda_i +1$ with multiplicity $1$ for $i=2,\ldots,|V|$;
    \item[$(iii)$] $\frac{1}{k}(|V|+k)$ with multiplicity $1$;
     \item[$(iv)$] $\frac{k+1}{k}|V|$ with multiplicity $k-1$.
\end{enumerate}
 \end{lem}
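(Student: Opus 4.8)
The plan is to work directly with the operator $L_{G_1}$ on $\mathbb{R}^{V_1}=\mathbb{R}^{V}\oplus\mathbb{R}^{S}$, where $S$ is the $k$-dominating set with $|S|=k$, and then to diagonalize by exhibiting three mutually orthogonal $L_{G_1}$-invariant subspaces whose dimensions add to $|V|+k$. The starting observation is that every new hyperedge $S\cup\{v\}$ has cardinality $k+1$, so it enters the Laplacian with weight $\tfrac1k$. Writing $n=|V|$, $\sigma_V=\sum_{u\in V}f(u)$ and $\sigma_S=\sum_{u\in S}f(u)$, and reading off $E_v$ in $G_1$ (each old $v\in V$ lies in its former edges together with the single new edge $S\cup\{v\}$, while each $w\in S$ lies in all $n$ new edges), a direct computation should yield, for $v\in V$,
\[
(L_{G_1}f)(v)=\bigl(L_G(f|_V)\bigr)(v)+f(v)-\tfrac1k\sigma_S,
\]
and for $w\in S$,
\[
(L_{G_1}f)(w)=\tfrac1k\bigl[(k+1)n\,f(w)-n\,\sigma_S-\sigma_V\bigr].
\]

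First I would treat $W_1=\{f:\ f|_S=0,\ \sigma_V=0\}$, the zero-extensions to $S$ of those functions on $V$ orthogonal to the constant function. Since $L_G$ is self-adjoint, fix an orthonormal eigenbasis $g_1=\mathrm{const},g_2,\ldots,g_n$ of $L_G$; then each $g_i$ with $i\ge 2$ has $\sigma_V(g_i)=0$, and its zero-extension $\widetilde{g_i}$ lies in $W_1$. On $\widetilde{g_i}$ the second formula gives $(L_{G_1}\widetilde{g_i})(w)=\tfrac1k(-\sigma_V)=0$ and the first gives $(L_{G_1}\widetilde{g_i})(v)=(\lambda_i+1)\widetilde{g_i}(v)$, so $L_{G_1}$ restricts to $L_G+I$ on $W_1$ and produces the eigenvalues $\lambda_i+1$ of item $(ii)$ with the correct multiplicities.

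Next I would handle $W_2=\{f:\ f|_V=0,\ \sigma_S=0\}$, the functions supported on $S$ that sum to zero. Substituting $\sigma_V=\sigma_S=0$ into the two formulas shows $L_{G_1}f$ vanishes on $V$ and equals $\tfrac{(k+1)n}{k}f(w)$ on $S$, so every such $f$ is an eigenfunction for $\tfrac{k+1}{k}|V|$; as $\dim W_2=k-1$, this gives item $(iv)$. On the two-dimensional space $W_3$ of functions constant ($=a$) on $V$ and constant ($=b$) on $S$, the formulas reduce $L_{G_1}$ to the $2\times 2$ matrix $\bigl(\begin{smallmatrix}1 & -1\\ -n/k & n/k\end{smallmatrix}\bigr)$ acting on $(a,b)$, whose trace is $\tfrac{n+k}{k}$ and determinant $0$; its eigenvalues are therefore $0$ (the global constant, item $(i)$) and $\tfrac1k(|V|+k)$ (item $(iii)$), the latter with eigenvector $b=-\tfrac{n}{k}a$.

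To conclude I would verify that $W_1,W_2,W_3$ are pairwise orthogonal — immediate, since $W_1$ and $W_2$ have disjoint supports, while the conditions $\sigma_V=0$ on $W_1$ and $\sigma_S=0$ on $W_2$ annihilate the inner products with the constant blocks of $W_3$ — and that $\dim W_1+\dim W_2+\dim W_3=(n-1)+(k-1)+2=n+k=|V_1|$, so these invariant subspaces exhaust $\mathbb{R}^{V_1}$ and the displayed list is the complete spectrum. The main obstacle is essentially the initial bookkeeping: correctly identifying $E_v$ for both old and new vertices of $G_1$ and tracking the $\tfrac1k$ weights to obtain the two action formulas. After that, the only steps needing care are the invariance of $W_1$ (specifically the vanishing of the $S$-component, which relies on $\sigma_V=0$) and the $2\times 2$ reduction on $W_3$.
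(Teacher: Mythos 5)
Your proposal is correct and takes essentially the same route as the paper's proof: the three families of eigenvectors the paper exhibits (zero-extensions of the eigenvectors of $L_G$, difference vectors supported on $S$, and the two-block vector that is $-\tfrac{k}{|V|}$ on $V$ and $1$ on $S$) are exactly bases of your $W_1$, $W_2$, and the non-constant part of $W_3$, and the underlying computations coincide. The only difference is organizational: you derive the two action formulas once, obtain the eigenvector for $\tfrac1k(|V|+k)$ from a $2\times 2$ reduction rather than guessing it, and make completeness explicit via the orthogonal decomposition $\mathbb{R}^{V_1}=W_1\oplus W_2\oplus W_3$, a point the paper leaves implicit in its multiplicity count.
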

 \begin{proof}
  It is clear that $0$ is an eigenvalue of $L_{G_1}$ with eigenvector $\chi_{|V_1|}$. Suppose $x_i\in\R^{|V|}$ is an eigenvector corresponding to $\lambda_i$ of $L_G$ for $i=2,\ldots,|V|$. Now for $i=2,3,\ldots, |V|$, define $\bar{x}_i\in \R^{|V_1|}$ as, 
$$\bar{x}_i(v)=\begin{cases}
    x_i(v) & \text{ if } v\in V\\
    0 & \text{ otherwise}
\end{cases}$$ 
 Now note that $V_1=V\cup S$, where $S$ is the set of vertices added in the domination, $|S|=k$ and $E_1=E\cup E_S$, where $E_S=\{S\cup\{u\}:u\in V\}$. 
 
 Now for $v\in V$, $E_v=E_v\cap E\cup \{S\cup\{v\}\}$, thus
\begin{eqnarray}(L_{G_1}\bar{x_i})(v)&=&\sum\limits_{e\in E_v\cap E}\frac{1}{|e|-1}\sum\limits_{u\in e}\big(\bar{x}_i(v)-\bar{x}_i(u)\big)+\frac{1}{k}\sum\limits_{u\in S}\big(\bar{x}_i(v)-\bar{x}_i(u)\big)\nonumber\\
&=& \lambda_i \bar{x}_i(v)+\frac{1}{k}k \bar{x}_i(v)\nonumber\\
&=& \big( \lambda_i+1\big)\bar{x}_i(v).\nonumber\end{eqnarray}
Now for $v\in S$, $E_v=E_S$, thus
\begin{eqnarray}(L_{G_1}\bar{x_i})(v)&=&\sum\limits_{e\in E_S}\frac{1}{|e|-1}\sum\limits_{u\in e}\big(\bar{x}_i(v)-\bar{x}_i(u)\big)\nonumber\\
&=& -\frac{1}{k}\sum\limits_{u\in V}\bar{x}_i(u)\nonumber\\
&=& 0\nonumber\end{eqnarray}
Therefore $\lambda_i+1$ is an eigenvalue of $L_{G_1}$ for $i=2,\ldots, |V|$.

Now suppose $S=\{v_1,\ldots,v_k\}$, and for $i=2,\ldots,k$, consider the vectors $y_i\in\R^{|V_1|}$ defined by
$$y_i(v)=\begin{cases}
    1 & \text{ if }v=v_1\\
    -1 & \text{ if }v=v_i\\
    0 & \text{ otherwise.}
\end{cases}$$
Now for $v\in V$,
$$\big(L_{G_1}y_i\big)(v)=\sum\limits_{e\in E_v\cap E}\frac{1}{|e|-1}\sum\limits_{u\in e}\big(y_i(v)-y_i(u)\big)+\frac{1}{k}\sum\limits_{u\in S}\big(y_i(v)-y_i(u)\big)=0$$
and for $v\in S$
\begin{eqnarray}(L_{G_1}y_i)(v)&=&\sum\limits_{e\in E_S}\frac{1}{|e|-1}\sum\limits_{u\in e}\big(y_i(v)-y_i(u)\big)\nonumber\\
&=& \frac{1}{k}\sum\limits_{e\in E_S}\sum\limits_{u\in e}y_i(v)-y_i(u)\nonumber\\
&=& \frac{k+1}{k}|V|y_i(v)\nonumber\end{eqnarray}
Therefore $\frac{k+1}{k}|V|$ is an eigenvalue of $L_{G_1}$ with multiplicity $k-1$.

\noindent Again define $y\in\R^{|V_1|}$ by
$$y(v)=\begin{cases}
    -\frac{k}{|V|} & \text{ if }v\in V\\
    1 & \text{ if }v\in S
\end{cases}$$
Now for $v\in V$,
\begin{eqnarray}
    \big(L_{G_1}y\big)(v)&=&\sum\limits_{e\in E_v\cap E}\frac{1}{|e|-1}\sum\limits_{u\in e}\big(y(v)-y(u)\big)+\frac{1}{k}\sum\limits_{u\in S}\big(y(v)-y(u)\big)\nonumber\\
    &=& 0+\frac{1}{k}k\Big(-\frac{k}{|V|}-1\Big)\nonumber\\
    &=&\frac{1}{k}(|V|+k)y(v)\nonumber
\end{eqnarray}

and for $v\in S$
\begin{eqnarray}(L_{G_1}y)(v)&=&\sum\limits_{e\in E_S}\frac{1}{|e|-1}\sum\limits_{u\in e}\big(y(v)-y(u)\big)\nonumber\\
&=& \frac{1}{k}\sum\limits_{e\in E_S}\Big(1+\frac{k}{|V|}\Big)\nonumber\\
&=& \frac{1}{k}(|V|+k)y(v)\nonumber\end{eqnarray}
Therefore $\frac{1}{k}(|V|+k)$ is an eigenvalue of $L_{G_1}$ with multiplicity $1$.
 \end{proof}
Now we are ready to prove \Cref{spectra-threshold}.

\noindent\textbf{Proof of the \Cref{spectra-threshold}}
\begin{proof}
We use induction on the number of dominations, $d$ to prove the result. 
The case $d=1$ follows from \Cref{lem:k_dom_spectra}. We assume that the statement is true for $d=n$, as the induction hypothesis. Suppose that, for $d=n$, the threshold hypergraph is $G_n$. Let $H_n$ be the hypergraph obtained after adding all the isolated vertices just after $d=n$ and before $d=n+1$. The Laplacian spectrum of $H_{n}$ is the same as the Laplacian spectrum of $G_n$ but with an additional $m_{n+1}$ number of zeros. Now apply $k_{n+1}$-domination on $H_n$ and from \Cref{lem:k_dom_spectra}, we find the statement is also true for $d=n+1$.
\end{proof}


In a $k$-threshold hypergraph, $k_1=k_2=\ldots k_d=k$. Thus a connected $k$-threshold hypergraph is a $(k+1)$-uniform hypergraph.   By taking $k_i=k$ for each $i=1,2,\ldots,d$ in \Cref{spectra-threshold}, we can also obtain the complete spectra of $k$-threshold hypergraph.

\begin{cor}\label{cor:k_threshold}
Let $G=0^{m_1}\,k\,0^{m_2}\,k\,\ldots\,0^{m_d}\,k $ be a $k$-threshold hypergraph. Then the Laplacian spectrum of $G$ is given by
\begin{enumerate}
    \item[$(i)$] 0 with multiplicity 1;
    \item[$(ii)$] $d-i+1$ with multiplicity $m_i$, for each $i=2,3,\ldots,d$;
    \item[$(iii)$] $d$ with multiplicity $m_1-1$;
     \item[$(iv)$] $\frac{1}{k}\sum\limits_{j=1}^{i}m_j +d$ with multiplicity 1, for each $i=1,2,\ldots,d$;
      \item[$(v)$] $\frac{k+1}{k}\sum\limits_{j=1}^{i}m_j +k(i-1)+d-1$ with multiplicity $k-1$, for each $i=1,2,\ldots,d$.
\end{enumerate}
\end{cor}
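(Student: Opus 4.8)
The plan is to obtain \Cref{cor:k_threshold} as an immediate specialization of \Cref{spectra-threshold}: since a $k$-threshold hypergraph is by definition the string $0^{m_1}\,k\,0^{m_2}\,k\,\ldots\,0^{m_d}\,k$, we simply set $k_1=k_2=\cdots=k_d=k$ in each of the five cases of \Cref{spectra-threshold} and simplify. No new spectral computation is needed; the entire content is already carried by the general theorem, so the proof reduces to algebraic bookkeeping. First I would note that cases $(i)$, $(ii)$, and $(iii)$ of \Cref{spectra-threshold} do not involve the parameters $k_i$ at all, so they carry over verbatim and account for cases $(i)$, $(ii)$, $(iii)$ of the corollary. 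The only work lies in cases $(iv)$ and $(v)$.

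For case $(iv)$, I substitute $k_j=k$ into the expression $\frac{1}{k_i}\bigl(\sum_{j=1}^i m_j + \sum_{j=1}^i k_j\bigr)+d-i$. The inner sum collapses to $\sum_{j=1}^i k_j = ik$, so the eigenvalue becomes $\frac{1}{k}\bigl(\sum_{j=1}^i m_j + ik\bigr)+d-i = \frac{1}{k}\sum_{j=1}^i m_j + i + d - i = \frac{1}{k}\sum_{j=1}^i m_j + d$, with multiplicity $1$, which matches case $(iv)$ of the corollary. For case $(v)$, I substitute into $\frac{k_i+1}{k_i}\bigl(\sum_{j=1}^i m_j + \sum_{j=1}^{i-1} k_j\bigr)+d-i$; here $\sum_{j=1}^{i-1} k_j = (i-1)k$, giving $\frac{k+1}{k}\bigl(\sum_{j=1}^i m_j + (i-1)k\bigr)+d-i = \frac{k+1}{k}\sum_{j=1}^i m_j + (k+1)(i-1) + d - i$. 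Expanding $(k+1)(i-1) = k(i-1) + (i-1)$ and combining $(i-1)+d-i = d-1$ yields $\frac{k+1}{k}\sum_{j=1}^i m_j + k(i-1) + d - 1$, with multiplicity $k_i-1 = k-1$, which is exactly case $(v)$.

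There is no genuine obstacle here: the only point requiring care is correctly tracking the cancellation in the two simplifications above, in particular distributing $\frac{k+1}{k}$ over the term $(i-1)k$ in case $(v)$ and verifying that the leftover $d-i$ recombines cleanly into the stated constant. Since the spectral structure is fully determined by \Cref{spectra-threshold}, the corollary follows by direct substitution.
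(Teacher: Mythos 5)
Your proposal is correct and follows exactly the paper's approach: the paper obtains \Cref{cor:k_threshold} by setting $k_i=k$ for all $i$ in \Cref{spectra-threshold}, and your algebraic simplifications of cases $(iv)$ and $(v)$ (which the paper leaves implicit) check out.
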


As we have mentioned before the above spectrum can be directly computed only from the degree sequence of the $k$-threshold hypergraph by using the Ferrer's diagram. Now we show the same in the proof of the \Cref{fd}.\\

\noindent\textbf{Proof of the \Cref{fd}}
\begin{proof}
    Suppose that $0^{m_1}\,k\,0^{m_2}\,k\,\ldots\,0^{m_d}\,k $ be a $k$-threshold hypergraph and $d_1\geq d_2 \geq \ldots\geq d_n $ be its degree sequence. Let $D_i$  denote the width of $i$-th block  from the top in Ferrer's diagram of the degree sequence. The total number of dominations, $d$, is given by
     $$d=\min\{s\in\mathbb{N}: D_s <s\}-1$$
     since  $D_d\geq d$ and $D_{d+i}< d+1, i\in \mathbb{N}$. If $u$ be any vertex added in $i$-th domination then the degree of $u$ is given by, 
          $$d(u)=\sum\limits_{j=1}^i m_j+k(i-1)+(d-i).$$
     Therefore 
     $$D_{i}=\sum\limits_{j=1}^{d-i+1} m_j+k(d-i)+i-1 \text{ for all } i=1,2,\ldots,d.$$
     Moreover, we have
    $$m_{d-i+1}=D_{i}-D_{i+1}-k+1 \text{ for all }i=1,2,\ldots d-1,$$
and
    $$m_1=\begin{cases}
        1 &\text{ if }D_d=d,\\
        D_{d}-D_{d+1} +1& \text{ otherwise.}
    \end{cases}$$
    Now as there is $D_1$ number of columns in the Ferrer's diagram. Let $C_i$ denote the sum of all the boxes in the $i$-th column, for $i=1,2,\ldots, D_1$. Thus 
    $$C_i=\sum\limits_{j=1}^{d-i+1}m_j+k d \text{ for all } i=1,2,\ldots,d;$$
     and $$C_{D_{d-i+1}}=k(d-i+1)\text{ for all }i=2,3,\ldots,d $$
    moreover 
    $$C_{D_d}=k d\text{ if }m_1>1.$$
Therefore the proof follows from the \Cref{cor:k_threshold}.
\end{proof}
\begin{exm}\rm
Let $G$ be a $3$-threshold hypergraph described in \Cref{exm:ferrer}.
\noindent Using \Cref{fd}, the complete spectrum  of $L_G$ is given by $\{\frac{1}{3} x:x\in X\}$ where 
 $$X=\{0,6,6,9,13,13,13,14,16,16,26,26,43,43,52,52\}.$$
\end{exm}
 Note that $1$-threshold hypergraph becomes a threshold graph, and thus the result obtained for a $k$-threshold hypergraph holds for a threshold graph.

Now, we illustrate the computation of the result of \Cref{fd} in the \Cref{algo}.
\begin{algorithm}[H]
\caption{Algorithm for computing the Laplacian spectrum of a $k$-threshold hypergraph $G$ when $d\ge2$. }
\label{algo}
\begin{algorithmic}
\Require $d_1\geq d_2 \geq \ldots\geq d_n $ \Comment{ Degree sequence in non-increasing order}
\State \Comment{Draw the Ferrer's Diagram}
\State $k \gets \text{height of the first block }$.
\State \textbf{Set} $s=$ Total number of blocks. 

\While{$D_s < s$}
\State $s \gets s-1$
\EndWhile 

\State $d \gets s$\Comment{ $d$ is the number of dominations}
\State
\State $D_j \gets \text{width of }j^{th}\text{ block from the top }$ \Comment{$j=1,2,\ldots,d.$}

\State $C_j \gets j^{th} \text{ column sum}$ \Comment{$j=1,2,\ldots , d_1.$}
\State
\For{$j=1:d$}
\State $\Gamma_j \gets \frac{1}{k}((k+1)D_j-j+1);$ 
\State $\Gamma_j$ is an eigenvalue with multiplicity $k-1$. 
\EndFor

\For{$j=1:d$}
\State $\frac{1}{k}\, C_j$ is an eigenvalue with multiplicity 1.
\EndFor
\For{$j=1:d-1$}
\State $\frac{1}{k}\,C_{D_{j}}$ is an eigenvalue with multiplicity $D_j-D_{j+1}-k$.
\EndFor
\If{$D_d \neq d$} 
\State $\frac{1}{k}C_{D_d}$ is an eigenvalue with multiplicity $D_d-d$.
\EndIf
\end{algorithmic}
\end{algorithm}

\section{Integral Hypergraphs}\label{sec:Integral_HyperGraphs}

The Laplacian $L_G$ of a (hyper)graph $G$ is said to be \textit{$r$-integral} if there exists $r\in \mathbb R$ such that all the eigenvalues of  $L_G$  are the integral-multiple of $r$, that is, all the eigenvalues are of the form $rp$ where $p\in \mathbb{Z}$. The graphs for which the Laplacian is $1$-integral (Laplacian integral graphs) have been well studied in the past \cite{MerrisR, kirkland, Yizheng}.
Here, we derive some classes of hypergraphs for which the associated Laplacians are $r$-integral. 
If for a hypergraph, the associated Laplacian is $r$-integral then we say that the hypergraph is a Laplacian $r$-integral hypergraph.

 Let $G(V, E)$ be an $m$-uniform hypergraph on $n$ vertices and let $\kappa_m^n$ denote the set of all $m$ element subsets of $V$. The complement of $G$, denoted by $\overline{G}$, is the hypergraph with the same vertex set $V$ and the edge set $\kappa_m^n\setminus E$. We can check that (see \cite{MR4208993}) if $G$ is an $m$-uniform hypergraph on $n$ vertices and $0=\lambda_1\leq \lambda_2\leq \ldots \leq \lambda_n$ are the eigenvalues of $L_G$ then the eigenvalues of $L_{\overline{G}}$ are $0=\lambda_1\leq \phi_m(n)-\lambda_n\leq \ldots \leq \phi_m(n)-\lambda_2$, where  $\phi_m(n)=\frac{n}{n-1}\binom{n-1}{m-1}.$

 \begin{prop}\label{threshold_r_integral}
     Every threshold hypergraph is Laplacian $r$-integral for some $r\in \mathbb{Q}$.
 \end{prop}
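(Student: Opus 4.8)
The plan is to read off $r$-integrality directly from the explicit eigenvalue list supplied by \Cref{spectra-threshold}, so that no new spectral computation is needed. Writing the hypergraph as $G = 0^{m_1}\,k_1\,0^{m_2}\,k_2\,\ldots\,0^{m_d}\,k_d$, the five families of eigenvalues in \Cref{spectra-threshold} are all rational: families $(i)$--$(iii)$ are integers, while each eigenvalue in families $(iv)$ and $(v)$ has the shape $\tfrac{1}{k_i}\cdot(\text{integer}) + (d-i)$, and hence lies in $\tfrac{1}{k_i}\Z$. Thus the whole spectrum is contained in the finite set $\bigcup_{i=1}^d \tfrac{1}{k_i}\Z$.

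Given this, I would set $r = \tfrac{1}{L}$ with $L = \operatorname{lcm}(k_1, k_2, \ldots, k_d)$ and verify that every eigenvalue is an integral multiple of $r$, i.e. that $L\lambda \in \Z$ for each eigenvalue $\lambda$. For families $(i)$--$(iii)$ this is immediate, since there $L\lambda$ equals $L$ times an integer. For a family-$(iv)$ eigenvalue $\lambda = \tfrac{1}{k_i}\big(\sum_{j=1}^i m_j + \sum_{j=1}^i k_j\big) + (d-i)$, multiplying by $L$ and using $k_i \mid L$ clears the denominator; likewise the factor $(k_i+1)/k_i$ appearing in family $(v)$ is cleared because $k_i \mid L$. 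Hence each eigenvalue is an integral multiple of $r = 1/L$, which is precisely the assertion that $L_G$ is $r$-integral with $r \in \mathbb{Q}$.

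There is essentially no analytic obstacle here: once \Cref{spectra-threshold} is in hand, the statement collapses to the elementary fact that a finite set of rationals admits a single $r$ — the reciprocal of a common denominator — making all of them integral multiples of $r$. The only points that warrant a sentence of care are the degenerate cases: a hypergraph built with no dominations has all eigenvalues equal to $0$ and is trivially $r$-integral for any $r$; and isolated vertices added after the final domination only enlarge the kernel, contributing extra zero eigenvalues, which preserve $r$-integrality since $0 = 0\cdot r$. I would dispose of these in a single remark and otherwise let the $\operatorname{lcm}$ argument carry the proof.
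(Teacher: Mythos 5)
Your proposal is correct and takes the same route as the paper: the paper's proof is the one-line observation that the result ``directly follows from \Cref{spectra-threshold},'' and your $\operatorname{lcm}$ argument with $r = 1/\operatorname{lcm}(k_1,\ldots,k_d)$ is exactly the detail that one-liner leaves implicit. Your extra care with the degenerate cases is fine but not needed beyond a remark, just as you say.
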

 \begin{proof}
     Proof directly follows from the \Cref{spectra-threshold}.
 \end{proof}
Now we define another class of hypergraphs where each hypergraph in that class is Laplacian $r$-integral for some $r\in \mathbb{Q}$.
\begin{df}[Co-$k$-threshold hypergraph]\label{cokthreshold}
A hypergraph is called a \textit{co-$k$-threshold hypergraph} if it is constructed using the following rules recursively:

\begin{itemize}
\item [$(i)$] Any $k$-threshold hypergraph is a co-$k$-threshold hypergraph.

\item[$(ii)$]The complement of a co-$k$-threshold hypergraph is a co-$k$-threshold hypergraph.

\item[$(iii)$] The union of two vertex-disjoint co-$k$-threshold hypergraphs is a co-$k$-threshold hypergraph.
\end{itemize}
\end{df}
The converse of the point $(i)$ in the \Cref{cokthreshold} is not true. As an example, for any two $k$-threshold hypergraphs $G_1$ and $G_2$, the hypergraph $\overline{G_1\cup G_2}$ is a co-$k$-threshold hypergraph but not a $k$-threshold hypergraph.

\begin{thm}
Any Co-$k$-threshold hypergraph is Laplacian $r$-integral for some $r\in \mathbb{Q}$.
\end{thm}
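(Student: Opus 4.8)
The plan is to reduce the statement to a single clean invariant: a finite hypergraph is Laplacian $r$-integral for some $r\in\mathbb{Q}$ if and only if every eigenvalue of its Laplacian is rational. The forward direction is immediate, and for the converse, if $\lambda_1,\ldots,\lambda_n\in\mathbb{Q}$ are the eigenvalues, writing each in lowest terms and letting $L$ be the least common multiple of the denominators gives $\lambda_i=(L\lambda_i)/L$ with $L\lambda_i\in\mathbb{Z}$, so the hypergraph is $\tfrac{1}{L}$-integral. This observation is what makes the recursive definition tractable, because rationality of the spectrum---unlike a fixed value of $r$---is trivially preserved under all three construction rules, so I never have to track a common $r$ through the recursion.

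With this reduction in hand, I would argue by structural induction on the recursive construction in \Cref{cokthreshold}, simultaneously carrying along the auxiliary claim that every co-$k$-threshold hypergraph is $(k+1)$-uniform; this is needed so that the complement eigenvalue formula applies at each step. For the base case, rule $(i)$, a $k$-threshold hypergraph is $(k+1)$-uniform and, by \Cref{cor:k_threshold} (equivalently \Cref{threshold_r_integral}), all of its Laplacian eigenvalues are integer multiples of $\tfrac{1}{k}$ and hence rational.

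For the complement step, rule $(ii)$, suppose $G$ is a co-$k$-threshold hypergraph on $n$ vertices that is $(k+1)$-uniform with rational spectrum $0=\lambda_1\le\cdots\le\lambda_n$. The complement $\overline{G}$ is again $(k+1)$-uniform, and by the complement eigenvalue formula recalled just before \Cref{threshold_r_integral} its eigenvalues are $0$ together with $\phi_{k+1}(n)-\lambda_i$ for $i=2,\ldots,n$. Since $\phi_{k+1}(n)=\tfrac{n}{n-1}\binom{n-1}{k}$ is rational and each $\lambda_i$ is rational, every eigenvalue of $L_{\overline{G}}$ is rational. For the union step, rule $(iii)$, a disjoint union has no hyperedge crossing the two vertex sets, so $L_{G_1\cup G_2}$ is block diagonal with blocks $L_{G_1}$ and $L_{G_2}$; its spectrum is the multiset union of the two spectra, and both $(k+1)$-uniformity and rationality are inherited. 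By induction every co-$k$-threshold hypergraph has rational spectrum, and the reduction in the first paragraph finishes the proof.

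The routine parts are the two preservation checks; the only point that needs care is the bookkeeping that keeps the hypergraph $(k+1)$-uniform throughout the recursion, since the complement formula is valid only for uniform hypergraphs. I do not expect a genuine obstacle here: the essential content is the equivalence between having a rational spectrum and being $\mathbb{Q}$-integral, after which everything follows from facts already established in the excerpt.
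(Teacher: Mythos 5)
Your proposal is correct and follows essentially the same route as the paper: structural induction over the three construction rules, using the complement spectral formula and the block-diagonal structure of disjoint unions. The only differences are in bookkeeping: your reduction to ``all eigenvalues rational'' replaces the paper's explicit tracking of the modulus (the paper notes that the complement of a $\frac{1}{q}$-integral hypergraph on $n$ vertices is $\frac{1}{q(n-1)}$-integral and that the union of $\frac{1}{q_1}$- and $\frac{1}{q_2}$-integral hypergraphs is $\frac{1}{q_1 q_2}$-integral), and your explicit maintenance of $(k+1)$-uniformity through the induction makes precise a hypothesis the paper leaves tacit, since the complement and its spectral formula are only defined for uniform hypergraphs.
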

\begin{proof}
    From \Cref{threshold_r_integral}, we can say that any $k$-threshold hypergraph is $r$-integral for some rational $r$. Note that we can assume that $r=\frac{1}{q}$ for some $q\in\mathbb{N}$. Also, if $G$ be any  $r$-integral hypergraph on $n$ vertices, where $r=\frac{1}{q}$ for some $q\in \mathbb{N}$, then $\overline{G}$ is $\frac{r}{n-1}$ -integral. Also, if $G_1$ be  Laplacian $\frac{1}{q_1}$-integral  and $G_2$ be Laplacian $\frac{1}{q_2}$-integral hypergraphs, then their union is  Laplacian $\frac{1}{q_1 q_2}$-integral. Thus the result follows.
\end{proof}

\section*{Acknowledgement}The work of Samiron Parui is supported by University Grant Commission, India [Beneficiary Code: 	BININ00965055 A]. The work of Rajiv Mishra is supported by the Council of Scientific$\And$Industrial Research, India[File number: 09/921(0347)/2021-EMR-I]. 
\bibliographystyle{siam}

\end{document}